\crefname{lemma}{Lemma}{Lemmas}
\crefname{theorem}{Theorem}{Theorems}
\def\@settitle{\begin{center}%
  \baselineskip14\p@\relax
  \bfseries
  \uppercasenonmath\@title
  \@title
  \ifx\@subtitle\@empty\else
     \\[1ex]\uppercasenonmath\@subtitle
     \footnotesize\mdseries\@subtitle
  \fi
  \end{center}%
}
\def\subtitle#1{\gdef\@subtitle{#1}}
\def\@subtitle{}
\newtheorem{theorem}{Theorem}[section]
\newtheorem{proposition}{Proposition}[theorem]
\newtheorem{lemma}[theorem]{Lemma}
\theoremstyle{remark}
\newtheorem{remark}[theorem]{Remark}
\theoremstyle{remark}
\begin{document}

\title{Anosov groups that are indiscrete in rank one}

\begin{abstract}
We exhibit Anosov subgroups of $\mathsf{SL}_d(\mathbb{R})$ that do not embed discretely in any rank-$1$ simple Lie group of noncompact type, or indeed, in any finite product of such Lie groups. These subgroups are isomorphic to free products $\Gamma * \Delta$, where $\Gamma$ is a uniform lattice in $\mathsf{F}_4^{(-20)}$ and $\Delta$ is a uniform lattice in $\mathsf{Sp}(m,1)$, $m \geq 51$.
\end{abstract}

\author{Sami Douba}
\address{Institut des Hautes \'Etudes Scientifiques, 
Universit\'e Paris-Saclay, 35 route de Chartres, 91440 Bures-sur-Yvette, France}
\email{douba@ihes.fr}

\author{Konstantinos Tsouvalas}
\address{Max Planck Institute for Mathematics in the Sciences, Inselstrasse 22, 04103 Leipzig, Germany}
\email{konstantinos.tsouvalas@mis.mpg.de}

\maketitle

\section{Introduction}
%The theory of Anosov representations of Gromov hyperbolic groups into non compact semisimple Lie groups generalizes that of convex cocompact groups in simple Lie groups of real rank one. The work of  Danciger--Gu\'eritaud--Kassel \cite{DGK} and independently of Zimmer \cite{Zimmer} shows that every projective Anosov representation $\rho:\Gamma \rightarrow \mathsf{SL}_{d}(\mathbb{R})$ can be associated to  a convex cocompact action on some properly convex domain in a real projective space\footnote{In fact when $\rho(\Gamma)$ preserves a properly convex domain in $\mathbb{P}(\mathbb{R}^d)$ then it acts convex cocompactly on some (possibly different) properly convex domain in $\mathbb{P}(\mathbb{R}^d)$.}. However, the authors of this note were not aware of an example in the literature of a Gromov hyperbolic group admitting a projective Anosov representation into a special linear group but not a discrete faithful representation into any product of finitely many rank one Lie groups. 

Throughout this note, a {\em rank-$1$ Lie group} is a Lie group isogenous to the isometry group of an irreducible symmetric space of noncompact type and real rank $1$. Such a symmetric space is isometric (up to scaling) to one of $\mathbb{R}{\bf H}^n, \mathbb{C}{\bf H}^n, \mathbb{H}{\bf H}^n$, $n \geq 2$, or $\mathbb{O}{\bf H}^2$; correspondingly, each rank-$1$ Lie group is isogenous to one of $\mathsf{SO}(n,1)$, $\mathsf{SU}(n,1)$, $\mathsf{Sp}(n,1)$, $n\geq 2$, or $\mathsf{F}_4^{(-20)}$; see, for instance, \cite[Thm.~8.12.2]{Wolf}.

Since their introduction in Labourie's seminal paper on the Hitchin component \cite{Labourie} and the further development of their theory by Guichard--Wienhard \cite{GW12}, Kapovich--Leeb--Porti \cite{KLP17, KLP18a, KLP18b},  Gu\'eritaud--Guichard--Kassel--Wienhard \cite{GGKW}, Bochi--Potrie--Sambarino \cite{BPS}, and others, Anosov representations have emerged as successful higher-rank generalizations of convex cocompact representations into rank-$1$ Lie groups. For a survey on Anosov representations and their strong dynamical, geometric, and topological properties, see \cite{KasselICM}. 

That being said, the authors were not aware of an example in the literature of a Gromov-hyperbolic group that admits an Anosov embedding into a higher-rank Lie group but does not already embed as a convex cocompact subgroup of a rank-$1$ Lie group. This question was raised by Canary \cite[Prob.~50.3]{canary2021anosov}; we thank the referee for this reference. 

The purpose of this note is to furnish such examples. Indeed, we observe the following (proved in \S \ref{proofs}).

\begin{theorem}\label{mainthm0} Let $\Gamma_1$ and $\Gamma_2$ be uniform lattices in $\mathsf{F}_4^{(-20)}$. Then the free product $\Gamma_1 * \Gamma_2$ admits no discrete and faithful representation into any rank-$1$ Lie group.\end{theorem}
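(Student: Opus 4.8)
The plan is to argue by contradiction and to extract a contradiction already from the restriction of $\rho$ to a single factor. Suppose $\rho\colon \Gamma_1 * \Gamma_2 \to H$ were discrete and faithful with $H$ a rank-$1$ Lie group, and let $X$ be the symmetric space of $H$. The two tools I would use are Corlette's archimedean superrigidity for lattices in $\mathsf{Sp}(n,1)$ and $\mathsf{F}_4^{(-20)}$, and the classical fact that the octonionic hyperbolic plane $\mathbb{O}{\bf H}^2$ admits no totally geodesic embedding into any rank-$1$ symmetric space other than itself.

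First I would observe that $\rho(\Gamma_1)$ is discrete, being a subgroup of the discrete group $\rho(\Gamma_1 * \Gamma_2)$, and infinite, since $\rho$ is faithful and $\Gamma_1$ is infinite; in particular $\rho(\Gamma_1)$ is unbounded in $H$. As $\Gamma_1$ inherits Kazhdan's property (T) from $\mathsf{F}_4^{(-20)}$, the Zariski closure of $\rho(\Gamma_1)$ is reductive, and its noncompact semisimple part is nontrivial because $\rho(\Gamma_1)$ is not relatively compact. Corlette's theorem, applied to the resulting Zariski-dense, unbounded homomorphism from $\Gamma_1$ into that noncompact part, then yields a continuous extension; tracking the (discreteness-forced trivial) compact and central directions, one concludes that $\rho|_{\Gamma_1}$ extends to a nontrivial continuous homomorphism $\tilde\rho\colon \mathsf{F}_4^{(-20)} \to H$.

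Next I would use simplicity and geometry to pin down $H$. Since $\mathsf{F}_4^{(-20)}$ is simple with trivial center, the nontrivial homomorphism $\tilde\rho$ has trivial kernel and is therefore injective, so it induces a $\tilde\rho$-equivariant totally geodesic embedding of $\mathbb{O}{\bf H}^2$ into $X$. By the classification of totally geodesic submanifolds of rank-$1$ symmetric spaces, this is possible only if $X \cong \mathbb{O}{\bf H}^2$, so that $H^\circ \cong \mathsf{F}_4^{(-20)}$ and $\tilde\rho$ is an automorphism of $H^\circ$. Consequently $\rho(\Gamma_1) = \tilde\rho(\Gamma_1)$ is the image of a uniform lattice under an automorphism, hence itself a uniform (cocompact) lattice in $H$.

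Finally I would close with an elementary index argument. The discrete group $\rho(\Gamma_1 * \Gamma_2)$ contains the cocompact lattice $\rho(\Gamma_1)$; since a discrete subgroup containing a cocompact lattice does so with finite index, we get $[\rho(\Gamma_1 * \Gamma_2) : \rho(\Gamma_1)] < \infty$, and hence $[\Gamma_1 * \Gamma_2 : \Gamma_1] < \infty$ by faithfulness. This contradicts the fact that a free factor of a nontrivial free product has infinite index (here the only role of $\Gamma_2$ is to be nontrivial). The hard part is the rigidity step of the second and third paragraphs: marshalling Corlette's superrigidity through the reductive Zariski closure and the bounded-versus-extends dichotomy, and invoking the non-embeddability of $\mathbb{O}{\bf H}^2$; once these are granted, the contradiction is immediate and, notably, uses only one of the two lattices.
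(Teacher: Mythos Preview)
Your argument is correct and follows essentially the same route as the paper: Corlette superrigidity plus the non-embeddability of $\mathbb{O}{\bf H}^2$ force $H$ to be isogenous to $\mathsf{F}_4^{(-20)}$, then $\rho(\Gamma_1)$ is a uniform lattice, yielding the finite-index contradiction. The only difference is in justifying that $\rho(\Gamma_1)$ is a lattice: the paper bypasses the delicate ``tracking the compact correction'' step you allude to and instead observes directly that a discrete faithful image of $\Gamma_1$ in a group isogenous to $\mathsf{F}_4^{(-20)}$ must be cocompact because its virtual cohomological dimension equals $\dim \mathbb{O}{\bf H}^2$.
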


One can replace $\Gamma_2$ in the statement of Theorem \ref{mainthm0} with any nontrivial group (indeed, if~$\Gamma_2$ is nontrivial then $\Gamma_1\ast \Gamma_1$ embeds as a subgroup of $\Gamma_1\ast \Gamma_2$). If we replace $\Gamma_2$ with a uniform quaternionic hyperbolic lattice of large dimension, we obtain a stronger conclusion.

\begin{theorem}\label{mainthm1} Let $\Gamma$ be a uniform lattice in $\mathsf{F}_4^{(-20)}$ and $\Delta$ a uniform lattice in $\mathsf{Sp}(m,1)$, where $m\geq 51$. Then the free product $\Gamma \ast \Delta$ admits no discrete and faithful representation into any Lie group isogenous to a product of rank-$1$ Lie groups.\end{theorem}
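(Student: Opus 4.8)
The plan is to argue by contradiction. Suppose $\rho \colon \Gamma \ast \Delta \to G$ is discrete and faithful, where $G = G_1 \times \cdots \times G_k$ with each $G_i$ a rank-$1$ Lie group, and write $\rho_i$ for $\rho$ followed by the projection to $G_i$. The first step is a superrigidity analysis on each factor. Since $\Gamma$ and $\Delta$ are uniform lattices in $\mathsf{F}_4^{(-20)}$ and $\mathsf{Sp}(m,1)$, groups for which the archimedean superrigidity of Corlette and Gromov--Schoen applies, for each $i$ the restriction $\rho_i|_\Gamma$ is either bounded (relatively compact image) or extends to a continuous homomorphism $\mathsf{F}_4^{(-20)} \to G_i$. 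As $\mathsf{F}_4^{(-20)}$ is simple, a nontrivial such extension is a local isomorphism onto its image, so its image is a reductive subgroup whose associated totally geodesic orbit is a copy of $\mathbb{O}\mathbf{H}^2$ inside the symmetric space of $G_i$; since the only rank-$1$ symmetric space containing a totally geodesic $\mathbb{O}\mathbf{H}^2$ is $\mathbb{O}\mathbf{H}^2$ itself, this forces $G_i \cong \mathsf{F}_4^{(-20)}$ with $\rho_i(\Gamma)$ a uniform lattice. Symmetrically, $\rho_i|_\Delta$ is bounded or forces $G_i \cong \mathsf{Sp}(n_i,1)$ for some $n_i \geq m$, using that $\mathbb{H}\mathbf{H}^m$ embeds totally geodesically only in quaternionic hyperbolic spaces of dimension $\geq m$.

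Set $I_\Gamma = \{i : \rho_i|_\Gamma \text{ unbounded}\}$ and $I_\Delta = \{i : \rho_i|_\Delta \text{ unbounded}\}$. A single $G_i$ cannot be isogenous to both $\mathsf{F}_4^{(-20)}$ and to some $\mathsf{Sp}(n_i,1)$, so $I_\Gamma \cap I_\Delta = \varnothing$; and since a discrete relatively compact subgroup is finite while $\Gamma,\Delta$ are infinite and $\rho$ is faithful, both $I_\Gamma$ and $I_\Delta$ are nonempty. In particular, on every factor indexed by $I_\Gamma$ the group $\Delta$ maps into a compact subgroup of $\mathsf{F}_4^{(-20)}$, hence (after conjugation) into its maximal compact subgroup $\mathsf{Spin}(9)$.

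The central step is to show that, for $m$ large, this image is \emph{finite}. An infinite compact image of a lattice in $\mathsf{Sp}(m,1)$ must, by arithmeticity (a consequence of superrigidity) together with superrigidity at the compact places, have Zariski closure containing a copy of the compact form $\mathsf{Sp}(m+1)$, whose dimension $(m+1)(2m+3)$ far exceeds $\dim \mathsf{Spin}(9) = 36$; no such embedding of compact groups exists, so $\rho_i(\Delta)$ is finite for every $i \in I_\Gamma$. Consequently the finite-index subgroup $N := \ker\!\big(\Delta \to \prod_{i \in I_\Gamma} G_i\big)$ acts trivially on all $\mathsf{F}_4^{(-20)}$-factors. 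I expect this quantitative comparison of compact images to be the main obstacle: it is where the exceptional structure of $\mathsf{F}_4^{(-20)}$ (its maximal compact being as small as $\mathsf{Spin}(9)$) is exploited, and where a hypothesis of the form $m \geq m_0$ is required; the explicit value $m \geq 51$ is what the estimate one uses yields.

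Finally I would contradict proper discontinuity of the action of the discrete group $\rho(\Gamma \ast \Delta)$ on the symmetric space $X = \prod_i X_i$. Fix nontrivial $\nu,\nu' \in N$ and a basepoint $o \in X$, and for each $\gamma \in \Gamma$ set $w_\gamma = \gamma\,\nu\,\gamma^{-1}\nu' \in \Gamma \ast N \leq \Gamma \ast \Delta$. On the factors indexed by $I_\Gamma$ the elements $\nu,\nu'$ act trivially, so there the component of $\rho(w_\gamma)$ collapses to $\rho_{I_\Gamma}(\gamma)\,\rho_{I_\Gamma}(\gamma)^{-1} = e$; on the remaining factors $\Gamma$ is bounded, so the projection of $\rho(\gamma)$ ranges over a fixed compact set and the component of $\rho(w_\gamma)$ has the form $k\,\rho(\nu)\,k^{-1}\rho(\nu')$ with $k$ in that compact set. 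Hence $d(w_\gamma\cdot o,\,o)$ stays uniformly bounded as $\gamma$ varies. A standard centralizer computation in the free product $\Gamma \ast N$ (the centralizer of the nontrivial element $\nu$ of the factor $N$ meets $\Gamma$ trivially) shows the $w_\gamma$ are pairwise distinct, producing infinitely many elements of bounded displacement and contradicting proper discontinuity. The only inputs that are not essentially formal are the superrigidity dichotomy of the first step and the compact-image finiteness of the third.
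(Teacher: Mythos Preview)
Your overall architecture matches the paper's: split the rank-$1$ factors, use Corlette to show $\Gamma$ is bounded on every factor not isogenous to $\mathsf{F}_4^{(-20)}$, show that $\Delta$ has \emph{finite} image on every $\mathsf{F}_4^{(-20)}$-type factor, pass to a finite-index $N \leq \Delta$ trivial on those, and then manufacture infinitely many distinct words (your $w_\gamma = \gamma\nu\gamma^{-1}\nu'$; the paper uses commutators $[\delta,\gamma_n]$) whose images are uniformly bounded, contradicting discreteness. The paper also explicitly reduces from ``isogenous to a product'' to an honest product by passing to finite-index subgroups on both sides, a step you suppress but which is routine.

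The substantive gap is precisely in the step you flag as the main obstacle. You assert that an infinite compact image of $\Delta$ in $\mathsf{Spin}(9)$ must have Zariski closure containing $\mathsf{Sp}(m+1)$, invoking ``superrigidity at the compact places.'' But Corlette's theorem applies only to representations with \emph{unbounded} image, and Gromov--Schoen's only to non-archimedean targets; there is no off-the-shelf compact-target superrigidity for $\mathsf{Sp}(m,1)$-lattices that yields this directly. Note also that your dimension comparison $(m{+}1)(2m{+}3) > \dim \mathsf{Spin}(9) = 36$ would already bite for very small $m$, not $m \geq 51$; this mismatch signals that the mechanism you sketch is not the one producing the stated threshold.

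The paper's route through this step (its Proposition~3.1 and Theorem~3.2) is different. One embeds the $\mathsf{F}_4^{(-20)}$-type factor into $\mathsf{GL}_{52}(\mathbb{R})$ via the adjoint representation. If the resulting $\Delta \to \mathsf{GL}_{52}(\mathbb{R})$ has infinite bounded image, Property~(T) allows one to conjugate into $\mathsf{O}(52,A)$ for a finitely generated subring $A$ of a number field; embedding $A$ discretely into a product of completions and applying Gromov--Schoen to kill the non-archimedean part forces some \emph{archimedean} projection $\Delta \to \mathsf{GL}_{52}(\mathbb{C})$ to be unbounded. Now Corlette does apply and yields a continuous $\mathsf{Sp}(m,1) \to \mathsf{GL}_{52}(\mathbb{C})$ with finite kernel, impossible once $\dim_{\mathbb{R}}\mathfrak{sp}(m,1) = 2m^2 + 5m + 3 > 2\cdot 52^2$, i.e.\ once $m \geq 51$.
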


The proofs of Theorems \ref{mainthm0} and \ref{mainthm1} make crucial use of the rank-$1$ superrigidity theorems of Corlette \cite{Corlette} and Gromov--Schoen \cite{GS}.

On the other hand, using a recent combination theorem of Dey--Kapovich--Leeb \cite{DKL}, we show that the free products in Theorems \ref{mainthm0} and \ref{mainthm1} admit Anosov embeddings. In fact, we establish in \S \ref{freeproducts} the following more general statement.

\begin{theorem}\label{anosovfreeproducts}
The property of admitting an Anosov embedding into a semisimple linear algebraic $\mathbb{R}$-group is closed under finite free products. 
\end{theorem}

Theorem \ref{anosovfreeproducts} also follows from a result announced by Danciger--Gu\'eritaud--Kassel \cite[Prop.~12.5]{DGK} and proved in their forthcoming work \cite{DGK2}.

In particular, the free products in Theorems \ref{mainthm0} and \ref{mainthm1} are examples of linear Gromov-hyperbolic groups that do not admit discrete and faithful representations into any rank-$1$ Lie group. The first examples of such hyperbolic groups were exhibited in \cite[Thm. 1.2 \& 1.7]{TT} as amalgamated products of two copies of a torsion-free uniform  lattice $\Delta<\mathsf{Sp}(m,1)$, $m\geq 2$, along a maximal cyclic subgroup of $\Delta$. It was suggested to the second author by Beatrice Pozzetti that such amalgams over $\mathbb{Z}$ also admit Anosov embeddings, though we do not pursue this here.

\medskip

\noindent {\bf Acknowledgements.} We thank Richard Canary, Fanny Kassel, Beatrice Pozzetti, and Florian Stecker for helpful discussions, as well as the referee for a careful reading of this note and for useful comments and suggestions. We also thank the IH\'ES for providing excellent working conditions. The first author was supported by the Huawei Young Talents Program. The second author was supported by the European Research Council (ERC) under the European's Union Horizon 2020 Research and Innovation Programme (ERC starting grant DiGGeS, grant agreement No 715982).

\section{Preliminaries}

Let $\mathsf{G}$ be a finite-center real semisimple Lie group with finitely many connected components and $\mathsf{K}$ a maximal compact subgroup of $\mathsf{G}$. Let $\mathfrak{a}$ be a Cartan subspace of $\mathfrak{g} = \mathrm{Lie}(\mathsf{G})$ and $\overline{\mathfrak{a}}^+ \subset \mathfrak{a}$ a dominant Weyl chamber, so that there exists a Cartan decomposition $\mathsf{G} = \mathsf{K} \exp(\overline{\mathfrak{a}}^+) \mathsf{K}$. Let $\mu: \mathsf{G} \rightarrow \overline{\mathfrak{a}}^+$ be the associated Cartan projection. Given a non-empty set $\Theta$ of simple restricted roots of $\mathfrak{g}$, a representation $\rho: \Gamma \rightarrow \mathsf{G}$ of a finitely generated group $\Gamma$ is {\em $\Theta$-Anosov} if there exist $c, C > 0$ such that for every $\gamma \in \Gamma$ and $\theta \in \Theta$,
\begin{equation}\label{Anosovdef}
\theta \big( \mu(\rho(\gamma)) \big) \geq c  \big| \gamma \big|_\Gamma - C,
\end{equation}
where $| \cdot |_\Gamma$ denotes word length with respect to some fixed finite generating set of $\Gamma$. That this characterization is equivalent to Labourie's original dynamical definition was established independently by Kapovich--Leeb--Porti \cite{KLP18b} and Bochi--Potrie--Sambarino \cite{BPS}. It is visible from this definition that if $\rho|_{\Gamma_0}$ is $\Theta$-Anosov for some finite-index subgroup $\Gamma_0<\Gamma$ then $\rho$ is itself $\Theta$-Anosov.

Observe that when $\mathsf{G}$ is a rank-$1$ Lie group then (\ref{Anosovdef}) simply says that $\rho$ is a quasi-isometric embedding, i.e., that $\rho$ is convex cocompact. We say a subgroup $\Gamma < \mathsf{G}$ is {\em $\Theta$-Anosov} if $\Gamma$ is the image of a $\Theta$-Anosov representation into~$\mathsf{G}$.

We now clarify condition (\ref{Anosovdef}) for the specific case $\mathsf{G} = \mathsf{SL}_d(\mathbb{R})$.
Given $g \in \mathsf{SL}_{d}(\mathbb{R})$, denote by $\mu_1(g) \geq \mu_2(g)\geq \ldots \geq \mu_d(g)$ the logarithms of the singular values of $g$ in non-increasing order (counting multiplicity). A representation $\rho:\Gamma \rightarrow \mathsf{SL}_d(\mathbb{R})$ is {\em $P_i$-Anosov}, $1\leq i\leq d-1$, if there exist $c, C>0$ such that $$\mu_i\big(\rho(\gamma)\big)-\mu_{i+1}\big(\rho(\gamma)\big)\geq c|\gamma|_{\Gamma}-C$$ for every $\gamma \in \Gamma$. We will use repeatedly the following fact. 

\begin{lemma} \label{comm} Suppose a finite-index normal subgroup $\Gamma_0<\Gamma$ embeds as a $P_1$-Anosov subgroup of $\mathsf{SL}_d(\mathbb{R})$. Then $\Gamma$ embeds as a $P_1$-Anosov subgroup of $\mathsf{SL}_r(\mathbb{R})$ for some $r\in \mathbb{N}$.
\end{lemma}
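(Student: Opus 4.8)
The plan is to produce the desired representation of $\Gamma$ by \emph{tensor} induction from $\rho_0$, rather than ordinary induction, precisely so as to preserve the projective ($P_1$) Anosov property, and then to repair faithfulness with an ordinary-induction summand. Write $\rho_0\colon\Gamma_0\hookrightarrow\mathsf{SL}_d(\mathbb R)$ for the given $P_1$-Anosov embedding, put $n=[\Gamma:\Gamma_0]$ (we may assume $n\geq 2$), and fix coset representatives $g_1=e,g_2,\dots,g_n$ for $\Gamma/\Gamma_0$. For $g\in\Gamma$ let $\rho_0^{\,g}\colon\Gamma_0\to\mathsf{SL}_d(\mathbb R)$ be the conjugate $\rho_0^{\,g}(x)=\rho_0(g^{-1}xg)$. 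Since $\Gamma_0\triangleleft\Gamma$, each map $x\mapsto g^{-1}xg$ is an automorphism of $\Gamma_0$ distorting word length by a bounded amount, and $\rho_0^{\,g}(x)$ has the same singular values as $\rho_0(g^{-1}xg)$; hence each $\rho_0^{\,g}$ is again $P_1$-Anosov, with constants that may be chosen uniform over $g\in\{g_1,\dots,g_n\}$.

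First I would record the singular-value behaviour of tensor products: for $A_1,\dots,A_n\in\mathsf{GL}_d(\mathbb R)$ the logarithmic singular values of $A_1\otimes\cdots\otimes A_n$ are the sums $\sum_k\mu_{i_k}(A_k)$, whence
\[
\mu_1(A_1\otimes\cdots\otimes A_n)-\mu_2(A_1\otimes\cdots\otimes A_n)=\min_{1\leq k\leq n}\big(\mu_1(A_k)-\mu_2(A_k)\big).
\]
Applying this with $A_k=\rho_0^{\,g_k}(\gamma)$ and invoking the uniform bounds from the previous paragraph shows that $\bigotimes_{k=1}^n\rho_0^{\,g_k}$ is a $P_1$-Anosov representation of $\Gamma_0$. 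Now the tensor-induced representation $\sigma=\mathrm{TInd}_{\Gamma_0}^{\Gamma}\rho_0$ on $V^{\otimes n}$ (with $V=\mathbb R^d$) is a representation of $\Gamma$ whose restriction to the normal subgroup $\Gamma_0$ is exactly $\bigotimes_k\rho_0^{\,g_k}$; thus $\sigma|_{\Gamma_0}$ is $P_1$-Anosov. A short computation gives $\det\sigma\in\{\pm1\}$, which one corrects by adjoining the sign character $\det\sigma$; this summand is bounded, so it does not affect the Anosov condition, and it lands the representation in $\mathsf{SL}$.

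The remaining point is faithfulness, and here tensor induction alone is insufficient: $\sigma|_{\Gamma_0}$ may kill an order-$2$ element $\gamma$ with $\rho_0(\gamma)=-I$. To repair this I would set $\rho=\sigma\oplus\mathrm{Ind}_{\Gamma_0}^{\Gamma}\rho_0$ (together with the sign character above). The ordinary induced representation has kernel the normal core in $\Gamma$ of $\ker\rho_0=\{e\}$, hence is faithful on $\Gamma$, so $\rho$ is faithful. To see that $\rho|_{\Gamma_0}$ remains $P_1$-Anosov, note that its singular values are those of $\sigma|_{\Gamma_0}$ together with those of $\bigoplus_k\rho_0^{\,g_k}$, and that the top singular value of $\sigma|_{\Gamma_0}(\gamma)$, namely $\sum_k\mu_1(\rho_0^{\,g_k}(\gamma))$, exceeds both the second singular value of $\sigma|_{\Gamma_0}(\gamma)$ and the largest singular value $\max_k\mu_1(\rho_0^{\,g_k}(\gamma))$ of $\mathrm{Ind}\,\rho_0(\gamma)$ by an amount growing linearly in $|\gamma|_{\Gamma_0}$; here one uses $\mu_1(\rho_0^{\,g_k}(\gamma))\geq \tfrac{c}{d}\,|\gamma|_{\Gamma_0}-C'\geq 0$ together with $n\geq 2$. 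Hence $\rho|_{\Gamma_0}$ is $P_1$-Anosov, and by the observation in the Preliminaries that a representation of $\Gamma$ restricting to a $\Theta$-Anosov representation on a finite-index subgroup is itself $\Theta$-Anosov, the faithful representation $\rho\colon\Gamma\to\mathsf{SL}_r(\mathbb R)$ is $P_1$-Anosov, exhibiting $\Gamma$ as a $P_1$-Anosov subgroup of $\mathsf{SL}_r(\mathbb R)$.

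I expect the main obstacle to be exactly this tension between the two induction procedures: ordinary induction yields faithfulness but destroys the dominance of a single top singular direction (a direct sum of $P_1$-Anosov representations need not be $P_1$-Anosov, indeed $\rho_0\oplus\rho_0$ never is), whereas tensor induction preserves $P_1$-Anosovness but can fail to be faithful on $2$-torsion. The resolution is to take their direct sum and verify that the tensor factor dynamically dominates the slower-growing ordinary-induction factor.
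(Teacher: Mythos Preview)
Your argument is correct and takes a genuinely different route from the paper's. The paper uses only \emph{ordinary} induction: since $\mathrm{Ind}_{\Gamma_0}^{\Gamma}\rho_0$ restricted to $\Gamma_0$ is the direct sum of the $m=[\Gamma:\Gamma_0]$ conjugates $\rho_0^{g_k}$, adjoining a trivial summand yields a faithful representation $\hat\rho$ of $\Gamma$ into $\mathsf{SL}_\ell(\mathbb R)$ (with $\ell$ arranged to be odd) that is $P_m$-Anosov, and one then passes to the $m^{\mathrm{th}}$ exterior power $\bigwedge^m\hat\rho$, which is $P_1$-Anosov and still faithful since $\mathsf{SL}_\ell(\mathbb R)$ is centerless for odd $\ell$. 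Your tensor-induction construction is in effect the dominant piece of this exterior power: the summand $V_1\otimes\cdots\otimes V_m$ inside $\bigwedge^m\big(\bigoplus_k V_k\oplus\mathbb R\big)$ is exactly what carries the top singular value, and your explicit gap formula $\mu_1-\mu_2=\min_k(\mu_1-\mu_2)(A_k)$ makes the $P_1$-Anosov property transparent. The paper's approach has the advantage of handling faithfulness uniformly via the parity trick on $\ell$, with no need to splice in a second summand; your approach, at the cost of the extra $\mathrm{Ind}$ summand and the domination estimate, isolates more cleanly why a single top singular direction persists and lands in a considerably smaller $\mathsf{SL}_r$ (roughly $d^m$ versus $\binom{dm}{m}$).
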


\begin{proof} Let $\rho:\Gamma_0 \xhookrightarrow{} \mathsf{SL}_{d}(\mathbb{R})$ be the inclusion and ${\rho^\text{ind}:\Gamma \rightarrow \mathsf{SL}_{dm}^\pm(\mathbb{R})}$ the induced representation (see, for instance, \cite[Section~3.3]{FH91}), where $m=[\Gamma:\Gamma_0]$. Since $\rho$ is faithful, the same is true for $\rho^\text{ind}$. Set $\ell = dm+1$ and let~$\hat{\rho} : \Gamma \rightarrow \mathsf{SL}_\ell(\mathbb{R})$ be the composition of $\rho^\text{ind}$ with the embedding $\mathsf{SL}_{dm}^\pm(\mathbb{R}) \rightarrow \mathsf{SL}_\ell(\mathbb{R})$ given by 
\begin{equation*} A \mapsto \begin{pmatrix}A & 0 \\ 0 &  \det(A) \end{pmatrix}. \end{equation*}
Since the restriction $\rho^\text{ind} |_{\Gamma_0}$ is an $m$-fold direct sum of $P_1$-Anosov representations, and since~$\hat{\rho}|_{\Gamma_0}$ is obtained from $\rho^\text{ind} |_{\Gamma_0}$ by inserting a $1$ on the diagonal, we have that $\hat{\rho}|_{\Gamma_0}$ is $P_m$-Anosov in~$\mathsf{SL}_{\ell}(\mathbb{R})$, and so the latter is also true for $\hat{\rho}$. If $\ell$ is even, we replace $\hat{\rho}$ with the representation obtained from $\hat{\rho}$ by inserting a $1$ on the diagonal (we still call the latter representation~$\hat{\rho}$), and increase $\ell$ by $1$. If $\ell$ is odd, we keep $\hat{\rho}$ and $\ell$ as is. Note that in any case $\hat{\rho}$ remains $P_m$-Anosov. 

Now consider the $m^{\textup{th}}$ exterior power $\bigwedge^m \hat{\rho}:\Gamma \rightarrow \mathsf{SL}\left(\bigwedge^{m}\mathbb{R}^{\ell}\right)$. Since $\ell$ is odd, we have that~$\bigwedge^{m}\hat{\rho}$ is faithful. Moreover, since $\hat{\rho}$ is $P_m$-Anosov we have that $\bigwedge^{m}\hat{\rho}$ is $P_1$-Anosov.\end{proof}

%Let $\mathbb{H}=\mathbb{R}\oplus \mathbb{R}i\oplus \mathbb{R}j\oplus \mathbb{R}k$ be Hamilton's quaternion algebra. For $z=a+bi+cj+dk \in \mathbb{H}$, $\overline{z}=a-bi-cj-dk$ denotes the conjugate of $z \in \mathbb{H}$. For a matrix $g=(g_{ij})_{ij} \in \textup{M}_{n \times n}(\mathbb{H})$, $g^{\ast}=(\overline{g_{ji}})_{ij}$ is the conjugate transpose of $g$. For $m \geq 1$, let $J_{m}=\textup{diag}\big(1,\ldots,1,-1\big)$. The projective space ${\bf P}(\mathbb{H}^{m+1})$ is by definition the set of equivalence classes of vectors in $\mathbb{H}^{m+1}$, where $u,v \in \mathbb{H}^{m+1}$ are equivalent if $u=vz$ for some $z \in \mathbb{H}\smallsetminus \{0\}$. The {\em quaternionic hyperbolic space} $\mathbb{H}{\bf H}^m$ is the open subset of the projective space $\mathbf{P}(\mathbb{H}^m)$ given by $$\mathbb{H}{\bf H}^m=\big\{[z_0:\ldots:z_{m-1}:z_m] \in {\bf P}(\mathbb{H}^{m+1}):\big|z_0\big|^2+\cdots+\big|z_{m-1}\big|^2<\big|z_m\big|^2\big \}$$ We consider the symplectic unitary group $$\mathsf{Sp}(m,1)=\big\{g \in \mathsf{GL}_{m+1}(\mathbb{H}): g^{\ast}J_mg=J_m \big\}$$ and the compact subgroup $\mathsf{Sp}(m)=\big \{g \in \mathsf{GL}_{m}(\mathbb{H}):gg^{\ast}=I_m\big \}$. The Lie group $\mathsf{Sp}(m,1)$ preserves and acts transitively on $\mathbb{H}{\bf H}^m$. The stabilizer of ${\bf v_0}=[\{0\}^{m}:1]$ in $\mathsf{Sp}(m,1)$ is the group $\mathsf{Sp}(m)\times \mathsf{Sp}(1)$ which is also the, unique up to conjugation, maximal compact subgroup of $\mathsf{Sp}(m,1)$.

Following \cite{WM}, we say that two Lie groups $\mathsf{G}_1$ and $\mathsf{G}_2$ are {\em isogenous} if there exist finite index subgroups $\mathsf{G}_i' < \mathsf{G}_i$ and finite normal subgroups $M_i< \mathsf{G}_i'$ such that $\mathsf{G}_1'/M_1\cong \mathsf{G}_2'/M_2$.

The following proposition is a consequence of Archimedean superrigidity of lattices in~$\mathsf{F}_4^{(-20)}$.

\begin{proposition}\label{bd2} Let $\Gamma$ be a lattice in $\mathsf{F}_4^{(-20)}$, and suppose that $\mathsf{G}$ is a rank-$1$ Lie group that is not isogenous to $\mathsf{F}_4^{(-20)}$. Then every representation $\rho:\Gamma \rightarrow \mathsf{G}$ has bounded image.\end{proposition}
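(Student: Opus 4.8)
The plan is to exploit the fact that $\mathsf{F}_4^{(-20)}$ has property (T) and that its lattices are superrigid, so that any representation into a rank-$1$ group either has bounded image or extends to a Lie group homomorphism from (essentially) $\mathsf{F}_4^{(-20)}$, which cannot happen unless the target contains a copy of $\mathsf{F}_4^{(-20)}$. More precisely, let $\rho : \Gamma \to \mathsf{G}$ be a representation, with $\mathsf{G}$ a rank-$1$ Lie group not isogenous to $\mathsf{F}_4^{(-20)}$. I would first reduce to the case that $\overline{\rho(\Gamma)}$ is a connected reductive group: if the Zariski closure (or closure in the Lie group topology) of $\rho(\Gamma)$ has a nontrivial amenable normal subgroup, then because $\Gamma$ has property (T) the representation $\rho$ cannot have unbounded image through that amenable part, so after quotienting we may assume the closure $\mathsf{H} = \overline{\rho(\Gamma)}$ is reductive (and semisimple up to center) with no compact factors once we pass to the noncompact part.

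Next I would invoke Archimedean (Corlette) superrigidity for lattices in $\mathsf{F}_4^{(-20)}$: since $\mathsf{F}_4^{(-20)}$ is a rank-one simple Lie group of quaternionic/octonionic type with property (T), Corlette's theorem \cite{Corlette} asserts that any homomorphism from a lattice $\Gamma < \mathsf{F}_4^{(-20)}$ into a semisimple Lie group $\mathsf{H}$ with noncompact image either has bounded (relatively compact) image or extends to a continuous homomorphism $\tilde\rho : \mathsf{F}_4^{(-20)} \to \mathsf{H}$. The strategy is to apply this with $\mathsf{H}$ the (connected, semisimple) noncompact part of the closure of $\rho(\Gamma)$. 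If the image is not bounded, we obtain a nontrivial continuous homomorphism $\tilde\rho$ from $\mathsf{F}_4^{(-20)}$ into the rank-$1$ group $\mathsf{G}$.

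The key structural step is then a dimension/rank obstruction: a nontrivial continuous homomorphism out of the simple Lie group $\mathsf{F}_4^{(-20)}$ must be injective on the level of Lie algebras (since $\mathfrak{f}_4^{(-20)}$ is simple, the kernel of $d\tilde\rho$ is an ideal, hence trivial), so $\mathsf{G}$ would contain a Lie subgroup locally isomorphic to $\mathsf{F}_4^{(-20)}$. But $\mathsf{F}_4^{(-20)}$ has real dimension $52$, while the rank-$1$ Lie groups — the isometry groups of $\mathbb{R}{\bf H}^n$, $\mathbb{C}{\bf H}^n$, $\mathbb{H}{\bf H}^n$, and $\mathbb{O}{\bf H}^2$ — are either of strictly smaller dimension or, in the cases where the dimension is large enough, are not of a type that admits $\mathfrak{f}_4^{(-20)}$ as a subalgebra. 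The clean way to phrase this is via real rank and restricted root data: $\mathsf{F}_4^{(-20)}$ is simple of real rank $1$, and a semisimple subalgebra of the Lie algebra of a rank-$1$ group is either compact or equal (up to isogeny) to the whole group; thus $\tilde\rho$ would force $\mathsf{G}$ to be isogenous to $\mathsf{F}_4^{(-20)}$, contradicting the hypothesis. Hence the image must be bounded.

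The main obstacle I anticipate is the careful handling of the reduction in the first paragraph together with the precise hypotheses of Corlette's superrigidity theorem. Specifically, one must ensure that the target to which superrigidity is applied is genuinely a semisimple Lie group with trivial compact and amenable parts, which requires dealing with the possibility that $\rho(\Gamma)$ is indiscrete and that its closure is disconnected or has nontrivial radical; property (T) of $\Gamma$ (inherited from $\mathsf{F}_4^{(-20)}$) is exactly what rules out unbounded image landing in an amenable (e.g. solvable) subgroup, but threading this through a possibly disconnected closure in a rank-$1$ group needs a bit of care. Once the target is arranged to be a suitable semisimple group, the superrigidity input and the subsequent simplicity/rank obstruction are essentially formal.
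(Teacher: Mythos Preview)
Your overall architecture---invoke Corlette superrigidity to promote $\rho$ to a continuous homomorphism out of $\mathsf{F}_4^{(-20)}$, then rule this out---is exactly what the paper does. But your ``key structural step'' is where the argument breaks. The assertion that \emph{a semisimple subalgebra of the Lie algebra of a rank-$1$ group is either compact or equal (up to isogeny) to the whole group} is false: $\mathfrak{so}(2,1)$ sits inside $\mathfrak{so}(n,1)$ for every $n\geq 2$ as a proper noncompact subalgebra, and there are chains $\mathfrak{so}(n,1)\subset\mathfrak{su}(n,1)\subset\mathfrak{sp}(n,1)$ of the same kind. So neither dimension nor real rank alone forbids an embedding of $\mathfrak{f}_4^{(-20)}$ into, say, $\mathfrak{sp}(m,1)$ for large $m$; what you have written at that point is an assertion, not an argument.

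The paper closes this gap differently. It uses the geometric form of Corlette's theorem to produce a $\rho$-equivariant totally geodesic embedding $\mathbb{O}{\bf H}^2\hookrightarrow X_{\mathsf{G}}$, observes that every rank-$1$ symmetric space other than $\mathbb{O}{\bf H}^2$ embeds totally geodesically into $\mathbb{H}{\bf H}^m$ for $m$ large, and then appeals to the classification of totally geodesic subspaces of $\mathbb{H}{\bf H}^m$ to conclude that $\mathbb{O}{\bf H}^2$ is not among them. That classification is the genuine input you are missing. Incidentally, your first-paragraph reduction (passing to a reductive closure via property~(T)) is unnecessary: Corlette's theorem already applies once the image is unbounded in the rank-$1$ target, so the obstacle you flag at the end is not the real one.
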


\begin{proof} Suppose that $\rho(\Gamma)$ has noncompact closure in $\mathsf{G}$. Corlette's Archimedean superrigidity theorem \cite{Corlette} provides a continuous $\rho$-equivariant totally geodesic embedding $\mathbb{O}{\bf H}^2\xhookrightarrow{} X_{\mathsf{G}}$, where $X_{\mathsf{G}}$ is the symmetric space associated to $\mathsf{G}$. Since $\mathsf{G}$ is not isogenous to $\mathsf{F}_4^{(-20)}$, we may find a totally geodesic embedding $X_{\mathsf{G}}\xhookrightarrow{} \mathbb{H}{\bf H}^m$ for $m \in \mathbb{N}$ large enough. In particular, we obtain a totally geodesic embedding of the Cayley hyperbolic plane $\mathbb{O}{\bf H}^2$ into $\mathbb{H}{\bf H}^m$, but this is impossible by the classification of totally geodesic subspaces of $\mathbb{H}{\bf H}^m$ \cite[Thm. 2.12]{M}. \end{proof}

%\begin{theorem} Let $\mathsf{G}$ be either $\mathsf{Sp}(m,1)$, $m\geq 2$, or $\mathsf{F}_{4}^{(-20)}$, $\mathsf{\Gamma}<\mathsf{G}$ be a lattice and $\mathbb{F}$ be a subfield of $\mathbb{C}$. Suppose that $\rho: \mathsf{\Gamma} \rightarrow \mathsf{GL}_d(\mathbb{F})$ is a representation with infinite image. Then there exists a faithful representation $\tau:\mathsf{GL}_d(\mathbb{F})\rightarrow \mathsf{GL}_d(\mathbb{C})$ such that $\tau\circ \rho:\mathsf{\Gamma}\rightarrow \mathsf{GL}_d(\mathbb{C})$ has unbounded image.\end{theorem}

\section{Proofs of Theorems \ref{mainthm0} \& \ref{mainthm1}}\label{proofs}
\begin{proof}[Proof of Theorem \ref{mainthm0}] Assume for a contradiction that we have a discrete and faithful representation $\rho:\Gamma_1\ast \Gamma_2\rightarrow \mathsf{G}$, where $\Gamma_1$ and $\Gamma_2$ are uniform lattices in $\mathsf{F}_{4}^{(-20)}$, and $\mathsf{G}$ is a rank-$1$ Lie group. Since $\rho$ is discrete and faithful on the factor $\Gamma_1$, we must have that $\mathsf{G}$ is isogenous to $\mathsf{F}_4^{(-20)}$ by Proposition \ref{bd2}. It follows that $\rho(\Gamma_1)$ is a uniform lattice of $\mathsf{G}$ since the virtual cohomological dimension of $\Gamma_1$ is equal to the dimension of $\mathbb{O}{\bf H}^2$; see, for instance, \cite[Prop.~VIII.8.1]{brown1994cohomology}. Since $\rho(\Gamma_1)$ is a lattice in~$\mathsf{G}$, and $\rho$ is discrete and faithful, we deduce that $\Gamma_1$ is of finite index in $\Gamma_1\ast \Gamma_2$. This is absurd since~$\Gamma_2$ is nontrivial.
\end{proof}

To prove Theorem \ref{mainthm1}, we will make use of the following consequence of Gromov--Schoen superrigidity \cite{GS}. Similar arguments can be found in the proofs of \cite[Thm. 8.1]{Kap} and \cite[Thm. 3.1]{CST}. 

%Crucial for the proof of our main theorem are the superrigidity theorem of Corlette \cite{Corlette} and Gromov--Schoen \cite{GS}.  In fact we will use the following theorem which is a consequence of the superrigidity theorems, see \cite[Thm. 3.1]{CST} and the proof in \cite[Thm. 8.1]{Kap}.

\begin{proposition}\label{infinite} Let $\mathsf{G}$ be either $\mathsf{Sp}(m,1)$, $m\geq 2$, or $\mathsf{F}_{4}^{(-20)}$, and let $\Gamma < \mathsf{G}$ be a lattice. Suppose that $\rho: \Gamma \rightarrow \mathsf{GL}_d(\mathbb{R})$ is a representation with infinite image. Then there is a representation $\rho': \Gamma \rightarrow \mathsf{GL}_d(\mathbb{C})$ with unbounded image. 
\end{proposition}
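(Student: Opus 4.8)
The plan is to show that the only obstruction to upgrading ``infinite image'' to ``unbounded complex image'' is a non-Archimedean one, and then to kill that possibility using the Gromov--Schoen rigidity of $\mathsf{G}$. First I would dispose of the trivial case: if $\rho(\Gamma)$ is already unbounded in $\mathsf{GL}_d(\mathbb{R})$, then post-composing with the inclusion $\mathsf{GL}_d(\mathbb{R})\hookrightarrow\mathsf{GL}_d(\mathbb{C})$ produces the desired $\rho'$, since a subset of $\mathsf{GL}_d(\mathbb{R})$ is bounded in $\mathsf{GL}_d(\mathbb{C})$ precisely when it is bounded in $\mathsf{GL}_d(\mathbb{R})$. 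So I may assume throughout that $\rho(\Gamma)$ is relatively compact in $\mathsf{GL}_d(\mathbb{R})$.

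Since $\Gamma$ is a lattice in a semisimple Lie group it is finitely generated, so the matrix entries of the $\rho(\gamma)$ generate a subfield $K\subset\mathbb{R}$ that is finitely generated over $\mathbb{Q}$, and $\rho(\Gamma)<\mathsf{GL}_d(K)$. Because $\rho(\Gamma)$ is infinite and finitely generated, the Burnside--Schur theorem (a finitely generated torsion linear group is finite) furnishes an element $\gamma_0\in\Gamma$ with $\rho(\gamma_0)$ of infinite order. Here I would invoke the standing relative-compactness assumption: an element generating a relatively compact subgroup is semisimple with eigenvalues of modulus one, so if all of these eigenvalues were roots of unity then $\rho(\gamma_0)$ would have finite order; hence at least one eigenvalue of $\rho(\gamma_0)$ is not a root of unity.

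The engine of the argument is then a lemma of Tits from his proof of the Tits alternative: for a field $K$ finitely generated over its prime field and an element of $\mathsf{GL}_d(K)$ possessing an eigenvalue that is not a root of unity, there exist a local field $k$ and an embedding $K\hookrightarrow k$ for which the image of that element in $\mathsf{GL}_d(k)$ is unbounded. Applying this to $\rho(\gamma_0)$, and using that $K$ has characteristic zero, the local field $k$ is $\mathbb{R}$, $\mathbb{C}$, or a finite extension of some $\mathbb{Q}_p$, and in each case $\rho(\Gamma)$ has unbounded image in $\mathsf{GL}_d(k)$.

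It remains to exclude the non-Archimedean alternative, and this is where the nature of $\mathsf{G}$ becomes decisive. Suppose $k$ is a finite extension of some $\mathbb{Q}_p$. Since $\Gamma$ has property (T), hence finite abelianization, the map $\det\circ\,\rho$ has finite image, so $|\det\rho(\gamma_0)|_k=1$; as $\rho(\gamma_0)$ is unbounded in $\mathsf{GL}_d(k)$ it has an eigenvalue of absolute value $\neq 1$, and therefore eigenvalues of differing absolute values, so its image in $\mathsf{PGL}_d(k)$ is unbounded as well. The resulting isometric $\Gamma$-action on the Euclidean (hence $\mathrm{CAT}(0)$) Bruhat--Tits building of $\mathsf{PGL}_d(k)$ then has unbounded orbits and no fixed point. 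But Gromov--Schoen superrigidity \cite{GS} for lattices in $\mathsf{Sp}(m,1)$ and $\mathsf{F}_4^{(-20)}$ forces every isometric action of $\Gamma$ on such a building to fix a point, a contradiction. Hence $k$ is Archimedean, and composing $\rho$ with the induced entrywise embedding $K\hookrightarrow k\hookrightarrow\mathbb{C}$ yields a representation $\rho'\colon\Gamma\to\mathsf{GL}_d(\mathbb{C})$ with unbounded image. I expect the crux to be precisely this last exclusion: it is the Gromov--Schoen building rigidity that separates $\mathsf{Sp}(m,1)$ and $\mathsf{F}_4^{(-20)}$ from the real and complex hyperbolic rank-one groups, whose lattices can admit unbounded $p$-adic representations.
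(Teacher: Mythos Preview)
Your argument is correct, but it follows a genuinely different route from the paper's. The paper first assumes $\rho(\Gamma)$ is bounded, conjugates it into $\mathsf{O}(d)$, and then uses Property~(T) via a local rigidity result (\cite[Prop.~6.6]{Rag}) to conjugate further into $\mathsf{O}(d,\mathbb{K})$ for a \emph{number field} $\mathbb{K}\subset\mathbb{R}$. The entries then lie in a finitely generated subring $A\subset\mathbb{K}$, and the diagonal embedding of $A$ into a product $\prod_i \mathbb{K}_i$ of completions is discrete; Gromov--Schoen bounds the non-Archimedean projections, forcing the Archimedean projection to be discrete and hence to have an unbounded factor. You instead stay over a field $K$ that is merely finitely generated over $\mathbb{Q}$, use Burnside--Schur to locate an element of infinite order, and invoke Tits's lemma to produce a \emph{single} local field $k$ in which that element becomes unbounded; Property~(T) enters only through finite abelianization, to pass the unboundedness down to $\mathsf{PGL}_d(k)$ before applying Gromov--Schoen on the building. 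Your approach avoids the number-field reduction and the adelic discreteness step, at the cost of importing Tits's lemma; the paper's approach yields slightly more (a discrete Archimedean representation, not just an unbounded one), though only unboundedness is used downstream.
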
 

\begin{proof}
We may assume that $\rho$ has bounded image, so that $\rho(\Gamma) \subset \mathsf{O}(n)$ up to postconjugation. Since $\Gamma$ has Property (T) (see \cite{BdlHV} and the references therein), up to further postconjugation, we have that $\rho(\Gamma) \subset \mathsf{O}(n,\mathbb{K})$ for some number field $\mathbb{K}\subset \mathbb{R}$ \cite[Prop.~6.6]{Rag}. Moreover, since $\Gamma$ is finitely generated, we in fact have $\rho(\Gamma) \subset \mathsf{O}(n, A)$ for some finitely generated subdomain $A \subset \mathbb{K}$. We may now find embeddings $A \subset \mathbb{K}_i$, where $\mathbb{K}_1, \ldots, \mathbb{K}_r$ are local fields, with $\mathbb{K}_1, \ldots, \mathbb{K}_s$ Archimedean and $\mathbb{K}_{s+1}, \ldots, \mathbb{K}_r$ non-Archimedean, so that the diagonal embedding $A \xhookrightarrow{} \prod_{i=1}^r \mathbb{K}_i$ is discrete (see, for instance, \cite[pg.~77,~Thm.~8]{weil1995basic}). We thus obtain from $\rho$ a discrete representation $\Gamma \rightarrow \prod_{i=1}^r \mathsf{GL}_n(\mathbb{K}_i)$. By the superrigidity result of Gromov--Schoen \cite{GS}, we have that the projection $\Gamma \rightarrow \prod_{i={s+1}}^r \mathsf{GL}_n(\mathbb{K}_i)$ is bounded, so that the projection $\Gamma \rightarrow \prod_{i=1}^s \mathsf{GL}_n(\mathbb{K}_i)$ is discrete. Since the latter representation has infinite image, we conclude that at least one of the projections  $\Gamma \rightarrow \mathsf{GL}_n(\mathbb{K}_i)$, $1 \leq i \leq s$, has unbounded image.
\end{proof}

We deduce the following from Proposition \ref{infinite}.

\begin{theorem}\label{bd1} Let $\Delta$ be a lattice in $\mathsf{Sp}(m,1)$, where $m\geq 51$.  Suppose that $\mathsf{H}$ is a semisimple Lie group isogenous to $\mathsf{F}_4^{(-20)}$. Then every representation $\rho:\Delta \rightarrow \mathsf{H}$ has finite image.\end{theorem}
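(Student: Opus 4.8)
The plan is to assume for contradiction that $\rho(\Delta)$ is infinite and to manufacture a nontrivial finite-dimensional representation of $\mathsf{Sp}(m,1)$ whose dimension is too small to be permitted once $m \geq 51$. First I would fix a convenient linear model for the target. Because $\mathsf{F}_4^{(-20)}$ is simultaneously simply connected and centerless (its maximal compact $\mathsf{Spin}(9)$ is simply connected, and $\mathsf{F}_4$ has trivial center), it is the unique connected Lie group in its isogeny class; hence $\mathsf{H}^\circ \cong \mathsf{F}_4^{(-20)}$. Replacing $\Delta$ by the finite-index subgroup $\rho^{-1}(\mathsf{H}^\circ)$, which is again a lattice in $\mathsf{Sp}(m,1)$ and on which the image stays infinite, I may assume that $\rho$ takes values in $\mathsf{F}_4^{(-20)}$, and I embed the latter in $\mathsf{GL}_{26}(\mathbb{R})$ via its (faithful, since $\mathsf{F}_4$ is centerless) $26$-dimensional representation.

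With $\rho : \Delta \to \mathsf{GL}_{26}(\mathbb{R})$ of infinite image in hand, I would invoke Proposition \ref{infinite} with $\mathsf{G} = \mathsf{Sp}(m,1)$ to obtain a representation $\rho' : \Delta \to \mathsf{GL}_{26}(\mathbb{C})$ with \emph{unbounded} image. The role of this step is precisely to trade a possibly relatively compact image for an unbounded one, so that Archimedean superrigidity becomes applicable. I would then feed $\rho'$ into Corlette's superrigidity theorem \cite{Corlette}. Since $\mathsf{Sp}(m,1)$ has property (T), the image of $\Delta$ in the solvable radical and in the compact factors of the Zariski closure of $\rho'(\Delta)$ is bounded, so the unboundedness of $\rho'$ localizes on a noncompact semisimple factor $\mathsf{S} \subseteq \mathsf{GL}_{26}(\mathbb{C})$ onto which $\Delta$ projects with Zariski-dense, unbounded image. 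Corlette's theorem then extends this projection, after passing to a finite-index subgroup, to a nontrivial continuous homomorphism $\mathsf{Sp}(m,1) \to \mathsf{S} \hookrightarrow \mathsf{GL}_{26}(\mathbb{C})$.

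The contradiction is then dimensional. The complexified Lie algebra of $\mathsf{Sp}(m,1)$ is of type $C_{m+1}$, whose smallest nontrivial representation is the $(2m+2)$-dimensional standard one; thus every nontrivial complex representation of $\mathsf{Sp}(m,1)$ has dimension at least $2m+2$. For $m \geq 51$ this is at least $104$, which exceeds $26$, so the homomorphism produced above cannot exist. Hence $\rho(\Delta)$ is finite.

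The step I expect to be delicate is the application of superrigidity: one must pass cleanly from the unbounded representation $\rho'$ valued in $\mathsf{GL}_{26}(\mathbb{C})$ to a genuine nontrivial homomorphism out of the ambient group $\mathsf{Sp}(m,1)$, which requires analyzing the Zariski closure of $\rho'(\Delta)$ and discarding its solvable and compact parts before Corlette's theorem is available. Once such a homomorphism is in hand, the representation-theoretic lower bound $2m+2$ renders the hypothesis $m \geq 51$ (indeed any $m \geq 13$) comfortably sufficient.
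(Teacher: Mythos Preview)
Your argument is correct and follows the same architecture as the paper's proof: linearize the target, invoke Proposition~\ref{infinite} to trade infinite image for unbounded image, apply Corlette's Archimedean superrigidity to obtain a nontrivial continuous homomorphism out of $\mathsf{Sp}(m,1)$, and finish with a dimensional obstruction. The paper carries this out using the $52$-dimensional adjoint representation of $\mathsf{F}_4^{(-20)}$ rather than the $26$-dimensional one, and its dimension count is cruder: it argues that the Lie algebra of $\mathsf{Sp}(m,1)$ cannot embed in $\mathfrak{gl}_{52}(\mathbb{C})$, which requires $2m^2+5m+3>2\cdot 52^2$ and hence exactly $m\geq 51$. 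Your use of the minimal nontrivial representation dimension $2m+2$ of type $C_{m+1}$, combined with the $26$-dimensional embedding, yields the same conclusion already for $m\geq 13$, so your route is genuinely sharper while costing no more input. The one place where the paper is slightly cleaner is the superrigidity step: rather than analyzing the Zariski closure of $\rho'(\Delta)$ and projecting to a noncompact factor, it cites the factorization form of Corlette's theorem (as in \cite[Thm.~3.7]{FH}) giving $\phi'=\overline{\phi}\cdot\phi_0$ with $\overline{\phi}$ continuous on $\mathsf{Sp}(m,1)$ and $\phi_0$ bounded, from which unboundedness of $\phi'$ immediately forces $\overline{\phi}$ to be nontrivial.
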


\begin{proof} Let $\mathsf{H}_0$ be a finite-index subgroup of $\mathsf{H}$, and $F_0$ and $F_1$ finite normal subgroups of $\mathsf{H}_0$ and $\mathsf{F}_4^{(-20)}$, respectively, such that $\mathsf{H}_0/F_0\cong \mathsf{F}_{4}^{(-20)}/F_1$. Denote by $\mathfrak{g}$ the $52$-dimensional real Lie algebra of $\mathsf{F}_4^{(-20)}$. Since $F_1$ is central in $\mathsf{F}_4^{(-20)}$, the adjoint representation $\textup{Ad}:\mathsf{F}_4^{(-20)} \rightarrow \mathsf{GL}(\mathfrak{g})$ induces a well-defined representation \hbox{$\psi: \mathsf{H}_0/F_0\rightarrow \mathsf{GL}(\mathfrak{g})$} with finite kernel.

 We now pass to a finite-index subgroup $\Delta_0$ of $\Delta$ such that $\rho(\Delta_0)$ is contained in $\mathsf{H}_0$, and consider the composition $\phi: = \psi\circ \pi\circ \rho:\Delta_0\rightarrow \mathsf{GL}(\mathfrak{g})$, where $\pi$ is the projection $\mathsf{H}_0\rightarrow \mathsf{H}_0/F_0$. Observe that $\rho$ has finite image if and only if $\phi$ does. 

 Now assume that $\phi$ has infinite image. In this case, Proposition \ref{infinite} provides a representation $\phi':\Delta_0 \rightarrow \mathsf{GL}_{52}(\mathbb{C})$ with unbounded image. In particular, by Corlette's Archimedean superrigidity theorem \cite{Corlette} (see \cite[Thm. 3.7]{FH}) there exists a continuous representation $\overline{\phi}:\mathsf{Sp}(m,1)\rightarrow \mathsf{GL}_{52}(\mathbb{C})$ and a representation $\phi_0:\Delta_0 \rightarrow \mathsf{GL}_{52}(\mathbb{C})$ with compact closure such that the images $\overline{\phi}(\Delta_0)$ and $\phi_0(\Delta_0)$ commute and $\phi(\gamma)=\overline{\phi}(\gamma)\phi_0(\gamma)$ for every $\gamma \in \Delta_0$. Since $\phi'(\Delta_0)$ has noncompact closure, the representation $\overline{\phi}$ is unbounded and hence has finite kernel. In particular, the Lie algebra of $\mathsf{Sp}(m,1)$ embeds into that of $\mathsf{GL}_{52}(\mathbb{C})$. Since the dimension of the symplectic group $
\mathsf{Sp}(m)$ is $m(2m+1)$, and since $\mathbb{H}\mathbf{H}^m\cong \mathsf{Sp}(m,1)/\big(\mathsf{Sp}(m)\times \mathsf{Sp}(1)\big)$ is $4m$-dimensional, the dimension of the Lie algebra of $\mathsf{Sp}(m,1)$ is $2m^2+5m+3>2\cdot 52^2$ for $m\geq 51$. We obtain a contradiction, and hence the image of $\phi$ is finite. It follows that the image of $\rho$ is finite.\end{proof}

%However, this cannot happen since $m\geq 51$ and the dimension of the Lie algebra of $\mathsf{Sp}(m,1)$ is $2m^2+5m+3>2\cdot 52^2$. 

We are now ready to establish our main result.

\begin{proof}[Proof of Theorem \ref{mainthm1}] Let $\mathsf{G}$ be a semisimple Lie group that is isogenous to a product of rank-$1$ Lie groups and $\rho:\Gamma \ast \Delta\rightarrow \mathsf{G}$ a representation. We prove that $\rho$ cannot be discrete and faithful.

\par By our assumption on $\mathsf{G}$, one can find a finite-index subgroup $\mathsf{G}_0$ of $\mathsf{G}$, rank-$1$ Lie groups $\mathsf{G}_1,\ldots,\mathsf{G}_q$, and a continuous epimorphism $\pi:\mathsf{G}_0\rightarrow \prod_{i=1}^{q}\mathsf{G}_i$ with finite kernel. Choose finite-index subgroups $\Gamma_0$ and $\Delta_0$ of $\Gamma$ and $\Delta$, respectively, such that $\rho(\Gamma_0\ast \Delta_0) \subset \mathsf{G}_0$. It suffices to prove that the composition $\phi:=\pi\circ \rho:\Gamma_0\ast \Delta_0\rightarrow \prod_{i=1}^{q}\mathsf{G}_i$ cannot be discrete and faithful.

\par Let $\textup{pr}_i:\prod_{j=1}^{q}\mathsf{G}_j \rightarrow \mathsf{G}_i$ denote the projection onto the $i^\text{th}$ factor, let $I_1\subset \{1,\ldots,q\}$ be the (possibly empty) set of indices $i$ such that $\mathsf{G}_i$ is isogenous to $\mathsf{F}_{4}^{(-20)}$, and set $I_2:=\{1,\ldots,q\}\smallsetminus I_1$. Since $m\geq 51$, by Theorem \ref{bd1}, the representation $\textup{pr}_i\circ \phi:\Delta_0\rightarrow \mathsf{G}_i$ has finite image for every $i\in I_1$, so that the subgroup $\Delta_1:=\bigcap_{i\in I_1}\textup{ker}(\textup{pr}_i \circ \phi) < \Delta_0$ is of finite index. Moreover, by Proposition \ref{bd2}, for every $j\in I_2$, the image of the representation $\textup{pr}_j\circ \phi:\Gamma_0\rightarrow \mathsf{G}_j$ is bounded since $\mathsf{G}_j$ is not isogenous to $\mathsf{F}_4^{(-20)}$. Now choose an arbitrary infinite sequence~$(\gamma_n)_{n\in \mathbb{N}}$ of distinct elements of $\Gamma$ and a non-trivial element $\delta\in \Delta_1$. Then the terms of the sequence 
$$g_n=[\delta,\gamma_n]=\delta \gamma_n \delta^{-1}\gamma_{n}^{-1},$$ 
$n\in \mathbb{N}$, in $\Gamma_0\ast \Delta_1$ are distinct. For every $n \in \mathbb{N}$ and $i\in I_1$, we have that $\textup{pr}_i(\phi(g_n))=1$ since $\textup{pr}_i(\phi(\delta))=1$. Moreover, for every $j\in I_2$, the sequence $(\textup{pr}_i(\phi(g_n))_{n\in \mathbb{N}}$ is bounded in~$\mathsf{G}_j$ since $\textup{pr}_i(\phi(\Gamma_0))$ is bounded. It follows that $(\phi(g_n))_{n\in \mathbb{N}}$ is bounded in the product $\prod_{i=1}^{q}\mathsf{G}_i$ and hence $\pi\circ \rho$ cannot be discrete and faithful. 
\end{proof}

\begin{remark}\label{classical}
If $\Gamma_1$ and $\Gamma_2$ are infinite-covolume convex cocompact subgroups of a rank-$1$ Lie group $\mathsf{G}$, so that the $\Gamma_i$ have nonempty domain of discontinuity on the visual boundary of the symmetric space of $\mathsf{G}$, then classical arguments of Maskit \cite[Thm. VII.C.2]{Maskit} imply that the free product $\Gamma_1 * \Gamma_2$ also embeds as a convex cocompact subgroup of~$\mathsf{G}$. For any convex cocompact subgroup (in particular, any uniform lattice) $\Gamma$ of a rank-$1$ Lie group not isogenous to $\mathsf{F}_4^{(-20)}$, there is some $m \in \mathbb{N}$ such that $\Gamma$ acts convex cocompactly on $\mathbb{H}{\bf H}^m$ preserving a proper totally geodesic subspace of the latter; in particular, the free product of any two such~$\Gamma$ again admits a convex cocompact representation into a rank-$1$ Lie group. 
\end{remark}

\section{Anosov subgroups and free products}\label{freeproducts}

In this section, we justify that the free products discussed in Section \ref{proofs} admit Anosov embeddings. Using a combination theorem of Dey--Kapovich--Leeb \cite{DKL} (see also \cite{MR4647907}), we show more generally that the property of admitting an Anosov embedding into some special linear group  is preserved under taking finitely many free products. 

\begin{proposition}\label{anosovpreservedunderfreeproducts}
Let $\Gamma_1$ and $\Gamma_2$ be residually finite groups admitting $P_1$-Anosov representations $\rho_i: \Gamma_i \rightarrow \mathsf{SL}_n(\mathbb{R})$, $i=1,2$. Then $\Gamma_1 * \Gamma_2$ embeds as a $P_1$-Anosov subgroup of $\mathsf{SL}_N(\mathbb{R})$ for some $N\in \mathbb{N}$.
\end{proposition}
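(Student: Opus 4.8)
The plan is to reduce the free product to a situation where the Dey--Kapovich--Leeb combination theorem \cite{DKL} applies, and then to package the output back into a single special linear group via an exterior power, exactly as in the proof of Lemma \ref{comm}. The combination theorem asserts, roughly, that if $\Gamma_1$ and $\Gamma_2$ are $P_1$-Anosov subgroups of $\mathsf{SL}_n(\mathbb{R})$ whose limit sets in $\mathbb{P}(\mathbb{R}^n)$ and the dual projective space are suitably ``transverse'' (in antipodal position, or in general position after a perturbation), then there exist $g_1, g_2 \in \mathsf{SL}_n(\mathbb{R})$ such that the group generated by $g_1 \Gamma_1 g_1^{-1}$ and $g_2 \Gamma_2 g_2^{-1}$ is $P_1$-Anosov and isomorphic to the free product $\Gamma_1 * \Gamma_2$. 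The key point is that such transversality can always be arranged after embedding into a large enough space and replacing the representations by generic conjugates.

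The first step I would take is to enlarge the ambient dimension so as to make the two limit sets disjoint and in general position. Concretely, starting from the inclusions $\Gamma_i \hookrightarrow \mathsf{SL}_n(\mathbb{R})$, I would embed both into a common $\mathsf{SL}_N(\mathbb{R})$ with $N$ large (for instance by block-diagonally padding with identity blocks or by taking direct sums with trivial summands) and then conjugate $\Gamma_2$ by a generic element $g \in \mathsf{SL}_N(\mathbb{R})$. Since the attracting limit set of a $P_1$-Anosov subgroup is a Lipschitz submanifold of the projective space of dimension strictly less than $N-1$ once $N$ is large, a Baire-category or measure-zero argument shows that a generic $g$ places the limit set of $g\Gamma_2 g^{-1}$ in general position with respect to that of $\Gamma_1$, so that the transversality hypothesis of \cite{DKL} is met. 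The $P_1$-Anosov property is preserved under conjugation and under the padding (padding with a $1$ on the diagonal preserves the top singular-value gap, hence the $P_1$-Anosov condition), so both factors remain $P_1$-Anosov in $\mathsf{SL}_N(\mathbb{R})$.

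The second step is to apply \cite{DKL} directly: once the two factors sit as $P_1$-Anosov subgroups of $\mathsf{SL}_N(\mathbb{R})$ with transverse limit sets, the combination theorem produces conjugates generating a $P_1$-Anosov subgroup abstractly isomorphic to $\Gamma_1 * \Gamma_2$. If the cleanest form of the combination theorem yields a representation that is $P_k$-Anosov for some $k>1$ (for instance because it is more naturally phrased for representations into $\mathsf{SL}_N$ that are $P_1$-Anosov but land in a reducible situation), I would recover a $P_1$-Anosov embedding into a single $\mathsf{SL}_{N'}(\mathbb{R})$ by passing to the $k^{\textup{th}}$ exterior power $\bigwedge^k$, using precisely the mechanism of Lemma \ref{comm}: an exterior power turns a $P_k$-Anosov representation into a $P_1$-Anosov one, and faithfulness is preserved provided the ambient dimension is chosen so that $\bigwedge^k$ is injective on the relevant matrices.

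The main obstacle I anticipate is verifying the transversality or antipodality hypothesis that feeds into \cite{DKL}. The combination theorem is not unconditional: it requires the limit sets of the two factors to be in a controlled relative position, and producing this requires either a genericity argument for the conjugating element or a careful choice of complementary invariant subspaces. I would devote the most care to this step, checking that the relevant ``bad'' configurations (limit sets meeting, or failing to be in general position) form a proper closed subvariety or a measure-zero set in the space of conjugating elements, so that a suitable $g$ exists. The remaining steps---preservation of the Anosov property under padding and conjugation, and the exterior-power reduction back to $P_1$---are routine given Lemma \ref{comm} and the defining inequality (\ref{Anosovdef}).
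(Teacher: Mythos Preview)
Your overall strategy---enlarge the dimension, place the two limit sets in antipodal position by conjugation, then invoke the Dey--Kapovich--Leeb combination theorem---matches the paper's approach in spirit, but there is a genuine gap in your reading of what \cite{DKL} actually delivers.

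The combination theorem \cite[Thm.~1.3]{DKL} does \emph{not} produce a $P_1$-Anosov embedding of $\Gamma_1 * \Gamma_2$; it only guarantees the existence of \emph{finite-index} normal subgroups $\Gamma_i' \lhd \Gamma_i$ such that $\langle \Gamma_1', g\Gamma_2' g^{-1}\rangle$ is isomorphic to $\Gamma_1' * \Gamma_2'$ and is $P_1$-Anosov. The passage from $\Gamma_1' * \Gamma_2'$ back to $\Gamma_1 * \Gamma_2$ is not automatic: the former is typically of infinite index in the latter, so Lemma~\ref{comm} does not apply directly, and this is where the exterior-power trick you mention is actually needed---not as a fallback for a hypothetical $P_k$-Anosov output. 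The paper bridges the gap by observing that the kernel $\Gamma_0$ of the natural map $\Gamma_1 * \Gamma_2 \to (\Gamma_1/\Gamma_1') \times (\Gamma_2/\Gamma_2')$ has finite index in $\Gamma_1 * \Gamma_2$ and, by the Kurosh subgroup theorem, decomposes as a free product of conjugates of the $\Gamma_i'$ together with free factors $\mathbb{Z}$; since both $\Gamma_i'$ are infinite, such a group embeds as a quasiconvex subgroup of $\Gamma_1' * \Gamma_2'$ and thereby inherits a $P_1$-Anosov representation, after which Lemma~\ref{comm} promotes this to $\Gamma_1 * \Gamma_2$. This entire step is missing from your outline, and it also explains why the cases with a finite factor are handled separately at the start of the paper's proof.

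A secondary point: your Baire-category argument for antipodality is plausible but vague, and the condition to be checked lives in the full flag manifold (lines \emph{and} hyperplanes), not just in projective space. The paper sidesteps genericity altogether: it first passes to the action on symmetric $n\times n$ matrices so that both $\Gamma_i$ preserve a common properly convex domain $\Omega$, then embeds into one dimension higher. This forces every limit flag $(\ell',\pi)$ of either $\Gamma_i$ to have $\ell' \in \partial\Omega$, $\pi \cap \Omega = \emptyset$, and $\pi$ passing through a fixed exterior point; one can then write down by hand a biproximal element $h$ whose attracting and repelling flags are antipodal to $\Lambda_1 \cup \Lambda_2$, and a power of $h$ serves as the conjugating element. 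This makes the verification of the hypotheses of \cite{DKL} completely explicit.
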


We remark that it follows from the theory of convex cocompactness in real projective spaces developed by Danciger--Gu\'eritaud--Kassel \cite[Thm.~1.15]{DGK}, together with a result \cite[Prop. 12.5]{DGK} announced by the same authors, that if $\Gamma_1$ and $\Gamma_2$ are $P_1$-Anosov subgroups of $\mathsf{SL}_n(\mathbb{R})$, then $\Gamma_1 * \Gamma_2$ embeds as a $P_1$-Anosov subgroup of $\mathsf{SL}_N(\mathbb{R})$, where $N = \frac{n(n+1)}{2} + 1$. More generally, Danciger--Gu\'eritaud--Kassel show that a free product $\Gamma_1\ast \Gamma_2$ of two discrete subgroups $\Gamma_1,\Gamma_2<\mathsf{SL}_N(\mathbb{R})$ that are convex cocompact in, but do not divide a properly convex domain in, $\mathbb{P}(\mathbb{R}^N)$ embeds as a discrete subgroup of $\mathsf{SL}_N(\mathbb{R})$ that is again convex cocompact in $\mathbb{P}(\mathbb{R}^N)$. Their proof will appear in \cite{DGK2}. 

We first establish Theorem \ref{anosovfreeproducts} assuming Proposition \ref{anosovpreservedunderfreeproducts}.

\begin{proof}[Proof of Theorem \ref{anosovfreeproducts}] %\normalfont{For every semisimple linear algebraic $\mathbb{R}$-group $\mathsf{G}$, one can find an integer $d=d(\mathsf{G})$ and a Lie group homomorphism $\psi:\mathsf{G}\rightarrow \mathsf{SL}_d(\mathbb{R})$ with the property that, for every Anosov subgroup $\Gamma<\mathsf{G}$, the restriction $\psi|_{\Gamma}:\Gamma\rightarrow \mathsf{SL}_d(\mathbb{R})$ is $P_1$-Anosov; see, for instance, \cite[Prop. 4.7]{GW12}.} One thus concludes from Proposition \ref{anosovpreservedunderfreeproducts} that if $\mathsf{G}_1$ and $\mathsf{G}_2$ are semisimple linear algebraic $\mathbb{R}$-groups and $\Gamma_i<\mathsf{G}_i$, $i=1,2$, is a ${\Theta_i}$-Anosov subgroup of $\mathsf{G}_i$, then $\Gamma_1\ast \Gamma_2$ embeds as a $P_1$-Anosov subgroup of $\mathsf{SL}_d(\mathbb{R})$ for some $d\in \mathbb{N}$.
Let $\mathsf{G}_i$ be a semisimple linear algebraic $\mathbb{R}$-group and $\Gamma_i < \mathsf{G}_i$ a ${\Theta_i}$-Anosov subgroup of $\mathsf{G}_i$ for $i=1,2$ and some non-empty set ${\Theta_i}$ of simple restricted roots of~$\mathfrak{g}_i$. For $i=1,2$, one can find an integer $n_i$ and a Lie group homomorphism $\psi_i: \mathsf{G}_i\rightarrow \mathsf{SL}_{n_i}(\mathbb{R})$ with the property that the restriction $\psi_i|_{\Gamma_i}:\Gamma_i\rightarrow \mathsf{SL}_{n_i}(\mathbb{R})$ is $P_1$-Anosov; see, for instance, \cite[Prop. 4.7]{GW12}. By adding 1's on the diagonal, we can assume $n_1 = n_2 =: n$, and the resulting representations $\rho_i: \Gamma_i \rightarrow \mathsf{SL}_n(\mathbb{R})$ remain $P_1$-Anosov; see \cite[Prop. 4.4]{GW12}. Moreover, since $\Gamma_i < \mathsf{G}_i$ and the $\mathsf{G}_i$ are assumed to be linear, we have that the $\Gamma_i$ are linear and thus residually finite \cite{malcev1940isomorphic}. It then follows from Proposition \ref{anosovpreservedunderfreeproducts} that $\Gamma_1 * \Gamma_2$ embeds as a $P_1$-Anosov subgroup of $\mathsf{SL}_N(\mathbb{R})$ for some $N \in \mathbb{N}$.
\end{proof}

\begin{proof}[Proof of Proposition \ref{anosovpreservedunderfreeproducts}]
We assume throughout that the $\Gamma_i$ are infinite. Indeed, if the $\Gamma_i$ are both finite, so that they both embed discretely in $\mathrm{O}(M)$ for some $M \in \mathbb{N}$, then ${\Gamma_1*\Gamma_2}$ embeds as a convex cocompact subgroup of $\mathsf{O}(M, 1)$ (see Remark \ref{classical}), and hence as a $P_1$-Anosov subgroup of $\mathsf{SL}_{M+2}(\mathbb{R})$. Moreover, if $\Gamma_1$ is infinite and $\Gamma_2$ is finite, then, by the Kurosh subgroup theorem (see the discussion immediately preceding Theorem 3.1 in \cite{sw79}), the kernel of the projection $\Gamma_1 * \Gamma_2 \rightarrow \Gamma_2$ is of the form
\begin{equation*}
\Gamma_1 * \cdots * \Gamma_1 * \mathbb{Z} * \cdots * \mathbb{Z},
\end{equation*}
so we have reduced to the case where the factors are all infinite, as the property of admitting a $P_1$-Anosov embedding into some special linear group passes to finite-index supergroups; see Lemma \ref{comm}.

Since the $\Gamma_i$ are residually finite, we can find finite-index normal subgroups $\Delta_i \triangleleft \Gamma_i$ intersecting the (finite) kernel of $\rho_i$ trivially. We identify the $\Delta_i$ with their images under the $\rho_i$. Now replace the $\Delta_i$ with their images under the symmetric square $\mathsf{SL}_n(\mathbb{R}) \rightarrow\mathsf{SL}(V) \cong \mathsf{SL}_d(\mathbb{R})$, where $V$ is the space of symmetric $(n \times n)$ real matrices and $d = \frac{n(n+1)}{2}$. Since this representation of $\mathsf{SL}_n(\mathbb{R})$ preserves the positive-definite cone $\mathcal{C} \subset V$, we have that the $\Delta_i$ preserve $\Omega := \mathbb{P}(\mathcal{C}) \subset \mathbb{P}(\mathbb{R}^d)$; the latter is a nonempty properly convex domain in $ \mathbb{P}(\mathbb{R}^d)$. Moreover, the $\Delta_i$ remain $P_1$-Anosov in $\mathsf{SL}_d(\mathbb{R})$ (again, by \cite[Prop. 4.4]{GW12}). Now identify $\mathbb{R}^d$ with the linear hyperplane $\Pi := \{x_1 = 0\}\subset \mathbb{R}^{d+1}$ via the map $(x_2, \ldots, x_{d+1}) \mapsto (0, x_2, \ldots, x_{d+1})$, and view $\mathsf{SL}_d(\mathbb{R})$, and hence the $\Gamma_i$, as being included in $\mathsf{SL}_{d+1}(\mathbb{R})$ via the map
\begin{equation*}
g \mapsto \begin{pmatrix} 1 & 0 \\ 0 & g \end{pmatrix}.
\end{equation*}
Then the $\Delta_i$ are $P_1$-Anosov, and hence $P_d$-Anosov (see \cite[Lem.~3.18~(i)]{GW12}), in $\mathsf{SL}_{d+1}(\mathbb{R})$. 

Let $\mathcal{F}$ be the flag manifold of $\mathsf{SL}_{d+1}(\mathbb{R})$ consisting of all pairs $(\ell', \pi)$ where $\ell' \in \mathbb{P}(\mathbb{R}^{d+1})$ and $\pi$ is a projective hyperplane in $\mathbb{P}(\mathbb{R}^{d+1})$ containing $\ell'$, and let $\Lambda_i \subset \mathcal{F}$ be the limit set of~$\Delta_i$ in $\mathcal{F}$. For each pair $(\ell', \pi)\in \Lambda_i$, we have that $\ell' \in \partial \Omega$, that $\pi \cap \Omega = \emptyset$, and that $\pi$ contains the point $[1: 0 : \ldots : 0] \in \mathbb{P}(\mathbb{R}^{d+1})$. Choose a point $\ell \in \Omega$ and points $\ell^\pm$ on the projective line $L \subset \mathbb{P}(\mathbb{R}^{d+1})$ joining $\ell$ to the point ${[1: 0 : \ldots : 0 ]}$ so that all four of the points just mentioned are distinct. Choose also projective hyperplanes $\pi^\pm \subset \mathbb{P}(\mathbb{R}^{d+1})$ containing $\ell^\pm$ and whose intersection with~$\mathbb{P}(\Pi)$ is disjoint from $\overline{\Omega}$. 

Under the above assumptions, the flags $(\ell^\pm, \pi^\pm) \in \mathcal{F}$ are transverse, and we can find an element $h \in \mathsf{SL}_{d+1}(\mathbb{R})$ that is simultaneously $P_1$- and $P_d$-proximal whose attracting and repelling fixed points in $\mathcal{F}$ are $(\ell^\pm, \pi^\pm)$. Moreover, the sets $\{ (\ell^\pm, \pi^\pm) \}$ and $\Lambda_1 \cup \Lambda_2$ are antipodal in $\mathcal{F}$, in the sense that each flag in one set is transverse to each flag in the other. It follows from \cite[Lem.~4.24]{DKL} that one can then find a neighborhood $U \subset \mathcal{F}$ of $\{(\ell^\pm, \pi^\pm)\}$ so that the sets $U$ and $\Lambda_1 \cup \Lambda_2$ remain antipodal in $\mathcal{F}$.

Up to replacing $h$ with one of its powers, we have that $h \Lambda_2 \subset U$. Since $h \Lambda_2$ is the limit set of $h \Delta_2 h^{-1}$ in $\mathcal{F}$, it follows from \cite[Thm.~1.3]{DKL} that there are finite-index characteristic subgroups $\Gamma_i'$ of $\Delta_i$ so that $\langle \Gamma_i', h \Gamma_2 ' h^{-1} \rangle < \mathsf{SL}_{d+1}(\mathbb{R})$ is naturally isomorphic to $\Gamma_1' * \Gamma_2'$ and is $P_1$-Anosov in $\mathsf{SL}_{d+1}(\mathbb{R})$. 

Let $\Gamma_0 < \Gamma_1 * \Gamma_2$ be the intersection of the kernels of the compositions $\Gamma_1 * \Gamma_2 \rightarrow \Gamma_i \rightarrow \Gamma_i / \Gamma_i'$. Then $\Gamma_0$ is of finite index in $\Gamma_1*\Gamma_2$ and, again by the Kurosh subgroup theorem, is isomorphic to a group of the form
\begin{equation}\label{formoffreeproduct}
\Gamma_1 ' * \cdots * \Gamma_1' * \Gamma_2' * \cdots * \Gamma_2' * \mathbb{Z} * \cdots * \mathbb{Z}.
\end{equation}
Since the $\Gamma_i'$ are both infinite, any group of the above form embeds as a quasiconvex subgroup of the Gromov-hyperbolic group $\Gamma_1' * \Gamma_2'$; indeed, for any $\gamma_i \in \Gamma_i'$, $i=1,2$, of infinite order, the subgroup
\begin{equation*}
\left\langle \gamma_2 \Gamma_1' \gamma_2^{-1}, \ldots, \gamma_2^r \Gamma_1 ' \gamma_2^{-r}, \gamma_1 \Gamma_2' \gamma_1, \ldots, \gamma_1^s \Gamma_2' \gamma_1^{-s}, \gamma_1^{s+1} \gamma_2 \gamma_1^{-(s+1)}, \ldots, \gamma_1^{s+q} \gamma_2 \gamma_1^{-(s+q)} \right\rangle
\end{equation*}
of $\Gamma_1' * \Gamma_2'$ is quasiconvex and is naturally isomorphic to a free product of the form (\ref{formoffreeproduct}) (that we may find $\gamma_i$ of infinite order in $\Gamma_i$ follows from the fact that the $\Gamma_i$ are infinite finitely generated linear groups, for instance). Since we have already found a $P_1$-Anosov embedding of the latter into a special linear group, we conclude that $\Gamma_0$ also admits such a representation, and hence so does the finite-index supergroup $\Gamma_1 * \Gamma_2$ by Lemma \ref{comm}. 
\end{proof}

\bibliography{indiscretebiblio}{}
\bibliographystyle{siam}

\end{document}